\date{}
\newlength{\defbaselineskip}
\newcommand{\setlinespacing}[1]%
           {\setlength{\baselineskip}{#1 \defbaselineskip}}
\newcommand{\actaqed}{\hfill $\actabox$}
{\medskip\noindent \textit{Proof of #1. }}%
{\actaqed \medskip}
\def\cD{{\mathcal D}}
\def\bbC{{\mathbb C}}
\def\br{\mathbf r}
\def\bt{\mathbf t}
 \def \<{\langle}
\def\>{\rangle}
\def \e{\varepsilon}
\def \ff{\varphi}
\def\bt{\beta}
\def\ga{\gamma}
\def\la{\lambda}
\def \conv{\operatorname{conv}}
\def \sp{\operatorname{span}}
\def \sign{\operatorname{sign}}
\def\bt{\beta}
\newtheorem{Theorem}{Theorem}[section]
\newtheorem{Lemma}{Lemma}[section]
\newtheorem{Remark}{Remark}[section]
\numberwithin{equation}{section}
\newcommand{\be}{\begin{equation}}
\newcommand{\ee}{\end{equation}}
\begin{document}

\title{On greedy approximation in complex Banach spaces}

\author{A. Gasnikov and V. Temlyakov}

\newcommand{\Addresses}{{
  \bigskip
  \footnotesize

\medskip
A.V. Gasnikov, \\ \textsc{Ivannikov institute for System Programming of Russian Academy of Sciences, Moscow, Russia;\\ Steklov Mathematical Institute of Russian Academy of Sciences, Moscow, Russia;  \\ Innopolis University, Tatarstan, Russia.
\\ E-mail:} \texttt{gasnikov@yandex.ru}

 \medskip
  V.N. Temlyakov, \textsc{University of South Carolina, USA,\\ Steklov Mathematical Institute of Russian Academy of Sciences, Russia;\\ Lomonosov Moscow State University, Russia; \\ Moscow Center of Fundamental and Applied Mathematics, Russia.\\  
  \\
E-mail:} \texttt{temlyakovv@gmail.com}

}}
\maketitle

\begin{abstract}{The general theory of greedy approximation with respect to arbitrary dictionaries is well developed 
  in the case of real Banach spaces. Recently, some of results proved for the Weak Chebyshev Greedy Algorithm (WCGA) in the case of real Banach spaces were extended to the case of complex Banach spaces.   In this paper  
  we extend some of known in the real case results for other than WCGA greedy algorithms  to the case of complex Banach spaces.		}
\end{abstract}

\section{Introduction}
\label{In}

Sparse and greedy approximation theory is important in applications. This theory is actively developing for about 40 years. In the beginning researchers were interested in approximation in Hilbert spaces (see, for instance,  \cite{FS}, \cite{H}, \cite{J1}, \cite{J2}, \cite{B}, \cite{DT1}, \cite{DMA}, \cite{BCDD}; for detailed history see \cite{VTbook}). Later, this theory was extended to the case of real Banach spaces (see, for instance, \cite{DGDS}, \cite{T15}, \cite{GK}, \cite{VT165}; for detailed history see \cite{VTbook} and \cite{VTbookMA}) and to the case of convex optimization (see, for instance, \cite{Cl}, \cite{FNW}, \cite{Ja2}, \cite{SSZ}, \cite{TRD}, \cite{Z}, \cite{DT}, \cite{GP}, \cite{DeTe}). The reader can find some open problems in the book \cite{VTbook}. 

We study greedy approximation with respect to arbitrary dictionaries in Banach spaces. This general theory is well developed 
  in the case of real Banach spaces (see the book \cite{VTbook}). Recently (see \cite{DGHKT}), some of results proved in the case of real Banach spaces were extended to the case of complex Banach spaces. The Weak Chebyshev Greedy Algorithm (WCGA) was studied in  \cite{DGHKT}. In this paper we discuss other than WCGA greedy algorithms. For them
  we extend some of known in the real case results  to the case of complex Banach spaces. We use the same as in the real case main ideas: First, we prove a recurrent inequality between the norms $\|f_m\|$ and $\|f_{m-1}\|$ of the residuals of the algorithm; Second, we analyze that recurrent inequality. 
The step from real case to the complex case requires some technical modifications in the proofs of the corresponding recurrent inequalities. For the reader's convenience we present a detailed analysis here. 

Let $X$ be a Banach space (real or complex) with norm $\|\cdot\|$. We say that a set of elements (functions) $\cD$ from $X$ is a dictionary if each $g\in \cD$ has norm bounded by one ($\|g\|\le1$) and the closure of $\sp \cD$ is $X$. It is convenient for us to consider along with the dictionary $\cD$ its symmetrization. In the case of real Banach spaces we 
denote 
$$
\cD^{\pm} := \{\pm g \, : \, g\in \cD\}.
$$
In the case of complex Banach spaces we denote 
$$
\cD^{\circ} := \{e^{i\theta} g \, : \, g\in \cD,\quad \theta \in [0,2\pi)\}.
$$
In the above notation $\cD^{\circ}$ symbol $\circ$ stands for the unit circle. 

Following standard notations denote 
$$
A_1^o(\cD) := \left\{f \in X\,:\, f =\sum_{j=1}^\infty a_j g_j,\quad \sum_{j=1}^\infty |a_j| \le 1, \quad g_j \in \cD,\quad j=1,2,\dots \right\}
$$
and by $A_1(\cD)$ denote the closure (in $X$) of $A_1^o(\cD)$. Then it is clear that $A_1(\cD)$ is the closure (in $X$) of the convex hull of $\cD^\pm$ in the real case and of $\cD^{\circ}$ in the complex case. Also, denote 
by $\conv(\cD)$ the closure (in $X$) of the convex hull of $\cD$.  

  We introduce a new norm, associated with a dictionary $\cD$, in the dual space $X^*$ by the formula
$$
\|F\|_\cD:=\sup_{g\in\cD}|F(g)|,\quad F\in X^*.
$$
Note, that clearly in the real case $\|F\|_\cD = \sup_{\phi\in\cD^\pm}F(\phi)$ and in the complex case $\|F\|_\cD = \sup_{\phi\in\cD^{\circ}}Re(F(\phi))$.

  In this paper we study  greedy algorithms with respect to $\cD$. 
For a nonzero element $h\in X$ we let $F_h$ denote a norming (peak) functional for $h$: 
$$
\|F_h\| =1,\qquad F_h(h) =\|h\|.
$$
The existence of such a functional is guaranteed by Hahn-Banach theorem.

 \section{Definitions and Lemmas}
\label{L}

We consider here approximation in uniformly smooth Banach spaces. For a Banach space $X$ we define the modulus of smoothness
$$
\rho(u) :=\rho(u,X) := \sup_{\|x\|=\|y\|=1}\left(\frac{1}{2}(\|x+uy\|+\|x-uy\|)-1\right).
$$
The uniformly smooth Banach space is the one with the property
$$
\lim_{u\to 0}\rho(u)/u =0.
$$
It is easy to see that for any Banach space $X$ its modulus of smoothness $\rho(u)$ is an even convex function satisfying the inequalities
$$
\max(0,u-1)\le \rho(u)\le u,\quad u\in (0,\infty).
$$
It is well known (see, for instance, \cite{DGDS})  that in the case $X=L_p$, 
 $1\le p < \infty$ we have
\be\label{Lprho}
 \rho(u,L_p) \le \begin{cases} u^p/p & \text{if}\quad 1\le p\le 2 ,\\
(p-1)u^2/2 & \text{if}\quad 2\le p<\infty. \end{cases} 
\ee 

We note that from the definition of modulus of smoothness we get the following inequality (in the real case see, for instance, \cite{VTbook}, p.336).
\begin{Lemma}\label{LL0} Let $x\neq0$. Then
\be\label{In1}
0\le \|x+uy\|-\|x\|-Re(uF_x(y))\le 2\|x\|\rho(u\|y\|/\|x\|). 
\ee
\end{Lemma}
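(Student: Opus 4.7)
The plan is to prove the two inequalities of \eqref{In1} separately, exactly mirroring the standard real-case argument but being careful that the complex pairing enters only through $\mathrm{Re}(F_x(y))$, so that the modulus of smoothness (which is defined with real scaling parameter $u$) still controls everything.

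For the lower bound, I would use only the defining properties of the norming functional $F_x$: namely $\|F_x\|=1$ and $F_x(x)=\|x\|$. Linearity of $F_x$ gives $F_x(x+uy)=\|x\|+uF_x(y)$. Taking real parts and using $\mathrm{Re}(F_x(x+uy))\le|F_x(x+uy)|\le\|x+uy\|$, I obtain $\|x\|+\mathrm{Re}(uF_x(y))\le\|x+uy\|$, which is precisely the left inequality. Note this works identically in the complex case, since we keep only the real part.

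For the upper bound, I would apply the already-established lower bound with $y$ replaced by $-y$, giving $\|x-uy\|\ge\|x\|-\mathrm{Re}(uF_x(y))$, i.e.\ $\mathrm{Re}(uF_x(y))\ge\|x\|-\|x-uy\|$. Substituting this into the quantity we want to bound yields
\[
\|x+uy\|-\|x\|-\mathrm{Re}(uF_x(y))\;\le\;\|x+uy\|+\|x-uy\|-2\|x\|.
\]
Now I would normalize: assuming $y\neq0$ (the case $y=0$ is trivial), set $\tilde x:=x/\|x\|$, $\tilde y:=y/\|y\|$, and $a:=u\|y\|/\|x\|$. Factoring $\|x\|$ out gives $\|x+uy\|+\|x-uy\|-2\|x\|=\|x\|\bigl(\|\tilde x+a\tilde y\|+\|\tilde x-a\tilde y\|-2\bigr)$, and then the definition of $\rho$ applied to the unit vectors $\tilde x,\tilde y$ bounds this by $2\|x\|\rho(a)=2\|x\|\rho(u\|y\|/\|x\|)$.

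I do not expect any real obstacle here: the whole argument is two applications of the norming-functional inequality plus one invocation of the definition of $\rho$. The only point that deserves a brief remark is that the transition from real to complex scalars is absorbed cleanly by taking real parts, because $\rho(u,X)$ is by definition a supremum over the real parameter $u$, and the norming functional bound $\mathrm{Re}(F_x(z))\le\|z\|$ is insensitive to whether $X$ is real or complex. Thus Lemma \ref{LL0} carries over verbatim from the real case.
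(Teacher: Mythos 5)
Your proof is correct and follows essentially the same route as the paper's: the left inequality via $\mathrm{Re}(F_x(x+uy))\le\|x+uy\|$, and the right inequality by combining the lower bound for $\|x-uy\|$ with the defining inequality $\|x+uy\|+\|x-uy\|\le 2\|x\|(1+\rho(u\|y\|/\|x\|))$. The only cosmetic difference is that you spell out the normalization $\tilde x, \tilde y, a$ explicitly, which the paper leaves implicit.
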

\begin{proof} We have
$$
\|x+uy\|\ge |F_x(x+uy)|\ge Re(F_x(x+uy))=\|x\|+Re(uF_x(y)).
$$
This proves the left inequality. Next, from the definition of modulus of smoothness it follows that
\be\label{L1.7}
\|x+uy\|+\|x-uy\|\le 2\|x\|(1+\rho(u\|y\|/\|x\|)).  
\ee
Also,
\be\label{L1.8}
\|x-uy\|\ge |F_x(x-uy)| \ge Re(F_x(x-uy)) =\|x\|-Re(uF_x(y)). 
\ee
Combining (\ref{L1.7}) and (\ref{L1.8}), we obtain
$$
\|x+uy\|\le \|x\|+Re(uF_x(y))+2\|x\|\rho(u\|y\|/\|x\|).
$$
This proves the second inequality. 
\end{proof}

We will use the following two simple and well-known lemmas. In the case of real Banach spaces these lemmas are proved, for instance, in \cite{VTbook}, Ch.6, pp.342-343.
\begin{Lemma}\label{LL1} Let $X$ be a uniformly smooth Banach space and $L$ be a finite-dimensional subspace of $X$. For any $f\in X\setminus L$   let $f_L$ denote the best approximant of $f$ from $L$. Then we have 
$$
F_{f-f_L}(\phi) =0
$$
for any $\phi \in L$.
\end{Lemma}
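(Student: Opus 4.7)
The plan is to leverage the optimality of $f_L$ together with the quantitative estimate of Lemma~\ref{LL0}, then pass to the limit using the definition of uniform smoothness. The only genuinely new wrinkle compared with the real case is that $F_{f-f_L}(\phi)$ now lives in $\bbC$, so I will need to kill both its real and imaginary parts.

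First, fix $\phi \in L$. Write $x := f-f_L$, which is nonzero since $f\notin L$, so the norming functional $F_x$ is well defined. Because $f_L + t\phi \in L$ for every scalar $t$, the best approximant property gives
\[
\|x - t\phi\| = \|f - (f_L + t\phi)\| \ge \|f - f_L\| = \|x\|.
\]
Apply Lemma~\ref{LL0} with the element $x$ and with $uy$ taken to be $-t\phi$ (i.e.\ $u=-t$, $y=\phi$, $t\in\bbR$, $t>0$). The right-hand inequality of (\ref{In1}) yields
\[
\|x - t\phi\| - \|x\| + Re\bigl(t F_x(\phi)\bigr) \le 2\|x\|\,\rho\!\left(\tfrac{t\|\phi\|}{\|x\|}\right).
\]
Combining with $\|x-t\phi\|\ge\|x\|$, we obtain
\[
Re\bigl(t F_x(\phi)\bigr) \le 2\|x\|\,\rho\!\left(\tfrac{t\|\phi\|}{\|x\|}\right).
\]

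Now divide by $t>0$ and let $t\to 0^+$. Uniform smoothness of $X$ means $\rho(u)/u\to 0$, so the right-hand side tends to $0$, and we conclude $Re\,F_x(\phi) \le 0$. The subspace $L$ is closed under multiplication by any scalar, so replacing $\phi$ by $-\phi$ gives the reverse inequality, hence $Re\,F_x(\phi)=0$. In the complex case, replacing $\phi$ by $i\phi\in L$ and using $Re\,F_x(i\phi) = -Im\,F_x(\phi)$ yields $Im\,F_x(\phi)=0$ as well. Therefore $F_{f-f_L}(\phi)=0$ for every $\phi\in L$, which is the claim.

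There is no serious obstacle: the argument is a standard first-order optimality calculation, made quantitative by Lemma~\ref{LL0}. The one thing to be careful about is that in the complex setting the inequality from Lemma~\ref{LL0} only controls the real part of $uF_x(y)$, which is precisely why one must separately test against $\phi$, $-\phi$, $i\phi$, $-i\phi$ to pin down both components of $F_x(\phi)$.
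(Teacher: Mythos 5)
Your proof is correct. It rests on the same underlying inequality (the smoothness estimate, via Lemma~\ref{LL0}) and the same first-order-optimality idea, but you organize it differently from the paper. The paper argues by contradiction: assuming $|F_{f-f_L}(\phi)|=\beta>0$, it multiplies $\phi$ by $\nu=\overline{\sign F_{f-f_L}(\phi)}$ so that $\nu F_{f-f_L}(\phi)$ is real and positive, and then shows directly that $f_L-\la\nu\phi$ beats $f_L$ for small $\la>0$. Your proof is direct: you apply Lemma~\ref{LL0} to $x=f-f_L$, use optimality to get $Re\,F_x(\phi)\le o(1)$ as $t\to 0^+$, and then exhaust the four test directions $\pm\phi$, $\pm i\phi$ to kill both the real and imaginary parts. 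The paper's $\nu$-trick collapses your four tests into one (the optimal complex rotation is chosen a priori), which is slightly slicker and is in fact the same device reused in the proofs of Lemmas~\ref{ML1} and~\ref{ML3}; your decomposition is more elementary and makes the passage from the real to the complex case explicit. One small stylistic point: since you invoke Lemma~\ref{LL0} with $u=-t<0$, you should note (as you implicitly do) that $\rho$ is even so that $\rho(u\|\phi\|/\|x\|)=\rho(t\|\phi\|/\|x\|)$; this is harmless but worth flagging since the argument of $\rho$ in (\ref{In1}) is written without absolute values.
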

\begin{proof} Let us assume the contrary: there is a $\phi \in L$ such that $\|\phi\|=1$ and
$$
|F_{f-f_L}(\phi)| =\bt >0.
$$
Denote by $\nu$ the complex conjugate of $\sign(F_{f-f_L}(\phi))$, where $\sign z := z/|z|$ for $z\neq 0$. Then 
$\nu F_{f-f_L}(\phi) = |F_{f-f_L}(\phi)|$.
For any $\la\ge 0$ we have from the definition of $\rho(u)$ that 
\be\label{2.3}
\|f-f_L-\la \nu\phi\| +\|f-f_L+\la \nu\phi\| \le 2\|f-f_L\|\left(1+\rho\left(\frac{\la}{\|f-f_L\|}\right)\right).
\ee
Next
\be\label{2.4}
 \|f-f_L+\la \nu\phi\| \ge |F_{f-f_L}(f-f_L+\la\nu\phi)| =\|f-f_L\|+\la\bt.
\ee
Combining (\ref{2.3}) and (\ref{2.4}) we get
\be\label{2.5}
\|f-f_L-\la\nu\phi\|\le \|f-f_L\|\left(1-\frac{\la\bt}{\|f-f_L\|} +2\rho\left(\frac{\la}{\|f-f_L\|}\right)\right).
\ee
Taking into account that $\rho(u) =o(u)$, we find $\la'>0$ such that
$$
\left(1-\frac{\la'\bt}{\|f-f_L\|} +2\rho\left(\frac{\la'}{\|f-f_L\|}\right)\right) <1.
$$
Then (\ref{2.5}) gives
$$ 
\|f-f_L-\la'\nu\phi\| < \|f-f_L\|,
$$
which contradicts the assumption that $f_L \in L$ is the best approximant of $f$.
\end{proof}

\begin{Remark}\label{SC1} The condition $F_{f-f_L}(\phi) =0$ for any $\phi \in L$ is also a sufficient condition for $f_L\in L$ 
to be a best approximant of $f$ from $L$.
\end{Remark}
\begin{proof} Indeed, for any $g \in L$ we have
$$
\|f-f_L\| = F_{f-f_L}(f-f_L) =  F_{f-f_L}(f-g) \le \|f-g\|.
$$
\end{proof}

\begin{Lemma}\label{LL2} For any bounded linear functional $F$ and any dictionary $\cD$, we have
$$
\|F\|_\cD:=\sup_{g\in \cD}|F(g)| = \sup_{f\in  A_1(\cD)} |F(f)|.
$$
\end{Lemma}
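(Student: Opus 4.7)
The plan is to show the inequality in both directions, with the nontrivial direction being an application of the triangle inequality together with continuity of $F$.

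For the easy direction $\sup_{g \in \cD} |F(g)| \le \sup_{f \in A_1(\cD)} |F(f)|$, I would simply observe that each $g \in \cD$ belongs to $A_1^o(\cD) \subset A_1(\cD)$ (take the representation with $a_1 = 1$, $g_1 = g$, and all other coefficients zero). Hence the supremum on the right is taken over a larger set.

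For the reverse direction, I would first estimate $|F(f)|$ for $f \in A_1^o(\cD)$: writing $f = \sum_{j=1}^\infty a_j g_j$ with $\sum_j |a_j| \le 1$ and $g_j \in \cD$, the triangle inequality together with linearity and continuity of $F$ gives
$$
|F(f)| = \left| \sum_{j=1}^\infty a_j F(g_j) \right| \le \sum_{j=1}^\infty |a_j|\, |F(g_j)| \le \Bigl(\sup_{g \in \cD} |F(g)|\Bigr) \sum_{j=1}^\infty |a_j| \le \|F\|_\cD.
$$
Then, since $F$ is a bounded (hence continuous) linear functional and $A_1(\cD)$ is by definition the norm closure of $A_1^o(\cD)$, the estimate $|F(f)| \le \|F\|_\cD$ extends from $A_1^o(\cD)$ to all of $A_1(\cD)$ by passing to the limit along an approximating sequence. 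Taking the supremum over $f \in A_1(\cD)$ yields the desired inequality.

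There is essentially no obstacle here; the argument is identical in the real and complex settings, because the triangle inequality $|\sum a_j F(g_j)| \le \sum |a_j||F(g_j)|$ does not care whether the scalars are real or complex. The only point that requires a moment's thought is the passage from $A_1^o(\cD)$ to its closure $A_1(\cD)$, which is immediate from continuity of $F$.
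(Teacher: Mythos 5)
Your proof is correct and follows essentially the same approach as the paper: the easy inclusion $\cD \subset A_1(\cD)$ in one direction, and the triangle inequality plus continuity of $F$ to pass to the closure in the other. The only cosmetic difference is that the paper estimates $|F(f)|$ for $f\in A_1(\cD)$ directly via a finite-sum $\e$-approximation in one step, whereas you first bound $|F|$ on $A_1^o(\cD)$ (working with the infinite series) and then pass to the closure; both are fine.
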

\begin{proof} The inequality 
$$
\sup_{g\in \cD}|F(g)| \le \sup_{f\in A_1(\cD)} |F(f)|
$$
is obvious because $\cD \subset A_1(\cD)$. We prove the opposite inequality. Take any $f\in A_1(\cD)$. Then for any $\e >0$ there exist $g_1^\e,\dots,g_N^\e \in \cD$ and numbers $a_1^\e,\dots,a_N^\e$ such that $|a_1^\e|+\cdots+|a_N^\e| \le 1$ and 
$$
\left\|f-\sum_{j=i}^Na_i^\e g_i^\e\right\| \le \e.
$$
Thus
$$
|F(f)| \le \|F\|\e + |F\left(\sum_{i=1}^Na_i^\e g_i^\e\right)| \le \e \|F\| +\sup_{g\in \cD} |F(g)|,
$$
which proves Lemma \ref{LL2}.
\end{proof}
 
\begin{Lemma}\label{LL3} For any bounded linear functional $F$ and any dictionary $\cD$, we have
$$
 \sup_{g\in \cD}Re(F(g)) = \sup_{f\in  \conv(\cD)} Re(F(f)).
$$
\end{Lemma}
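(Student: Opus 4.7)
The plan is to mimic the proof of Lemma \ref{LL2} almost verbatim, with two substitutions: replace $A_1(\cD)$ by $\conv(\cD)$ (so the scalar coefficients become nonnegative and sum to $1$ rather than being complex with $\ell_1$-norm at most $1$) and replace $|F(\cdot)|$ by the linear functional $Re(F(\cdot))$. The inequality $\sup_{g\in\cD} Re(F(g)) \le \sup_{f\in\conv(\cD)} Re(F(f))$ is immediate from $\cD\subset\conv(\cD)$.

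For the reverse inequality, I would fix $f\in\conv(\cD)$ and $\e>0$, and use the definition of $\conv(\cD)$ as the closure of the convex hull to produce $g_1,\dots,g_N\in\cD$ and nonnegative weights $a_1,\dots,a_N$ with $\sum_{i=1}^N a_i=1$ such that
\[
\left\|f-\sum_{i=1}^N a_i g_i\right\|\le \e.
\]
Then, using linearity of $Re(F(\cdot))$, boundedness of $F$, and the convex-combination property,
\[
Re(F(f)) \le \|F\|\,\e + Re\!\left(F\!\left(\sum_{i=1}^N a_i g_i\right)\right) = \|F\|\,\e + \sum_{i=1}^N a_i\, Re(F(g_i)) \le \|F\|\,\e + \sup_{g\in\cD} Re(F(g)).
\]
Letting $\e\to 0$ and then taking the supremum over $f\in\conv(\cD)$ closes the argument.

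There is no real obstacle here; the only point to keep in mind is that unlike in Lemma \ref{LL2}, we cannot freely insert moduli or signs to ``align'' $F(g_i)$ with a fixed phase, because the coefficients in a convex combination are constrained to be nonnegative reals. This is precisely why the statement involves $Re(F(\cdot))$ (a linear functional that is preserved by convex combinations) rather than $|F(\cdot)|$. The rest is a one-line linearity computation, so the proof should be only a few lines long.
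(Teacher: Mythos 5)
Your proof is correct and is essentially identical to the paper's own argument: the easy inequality from $\cD\subset\conv(\cD)$, approximation of $f\in\conv(\cD)$ by a convex combination within $\e$, and the one-line estimate via linearity of $Re(F(\cdot))$ and boundedness of $F$. Your closing remark about why $Re(F(\cdot))$ rather than $|F(\cdot)|$ is the right functional here is accurate and a nice observation, though not needed for the proof itself.
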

\begin{proof} The inequality 
$$
\sup_{g\in \cD}Re(F(g)) \le \sup_{f\in \conv(\cD)} Re(F(f))
$$
is obvious because $\cD \subset \conv(\cD)$. We prove the opposite inequality. Take any $f\in \conv(\cD)$. Then for any $\e >0$ there exist $g_1^\e,\dots,g_N^\e \in \cD$ and nonnegative numbers $a_1^\e,\dots,a_N^\e$ such that $a_1^\e+\cdots+a_N^\e = 1$ and 
$$
\left\|f-\sum_{j=i}^Na_i^\e g_i^\e\right\| \le \e.
$$
Thus
$$
Re(F(f)) \le \|F\|\e + Re(F\left(\sum_{i=1}^Na_i^\e g_i^\e\right)) \le \e \|F\| +\sup_{g\in \cD} Re(F(g)),
$$
which proves Lemma \ref{LL3}.
\end{proof}

\section{Main results}
\label{M}

In this section we discuss three greedy type algorithms and prove convergence  and rate of convergence results for those algorithms. In the case of real Banach spaces these algorithms and the corresponding results on their convergence  and rate of convergence are known. We show here how to modify those algorithms to the case of complex Banach spaces. 

\subsection{WGAFR} 
\label{MSub1}

The following algorithm -- WGAFR -- can be defined in the same way in both the real and the complex cases. We present the definition in the complex case. 

{\bf Weak Greedy Algorithm with Free Relaxation  (WGAFR).} \newline
Let $\tau:=\{t_m\}_{m=1}^\infty$, $t_m\in[0,1]$, be a weakness  sequence. We define $f_0   :=f$ and $G_0  := 0$. Then for each $m\ge 1$ we have the following inductive definition.

(1) $\varphi_m   \in \cD$ is any element satisfying
$$
|F_{f_{m-1}}(\varphi_m)| \ge t_m  \|F_{f_{m-1}}\|_\cD.
$$

(2) Find $w_m\in \bbC$ and $ \lambda_m\in\bbC$ such that
$$
\|f-((1-w_m)G_{m-1} + \la_m\varphi_m)\| = \inf_{ \la,w\in\bbC}\|f-((1-w)G_{m-1} + \la\varphi_m)\|
$$
and define
$$
G_m:=   (1-w_m)G_{m-1} + \la_m\varphi_m.
$$

(3) Let
$$
f_m   := f-G_m.
$$

We begin with the main lemma. In the case of real Banach spaces this lemma is known (see, for instance, \cite{VTbook}, p. 350, Lemma 6.20). 
 \begin{Lemma}\label{ML1} Let $X$ be a uniformly smooth complex Banach space with modulus of smoothness $\rho(u)$. Take a number $\e\ge 0$ and two elements $f$, $f^\e$ from $X$ such that
$$
\|f-f^\e\| \le \e,\quad
f^\e/A(\e) \in A_1(\cD),
$$
with some number $A(\e)\ge \e$.
Then we have for the residual of the WGAFR after $m$ iterations
$$
\|f_m\| \le \|f_{m-1}\|\inf_{\la\ge0}\left(1-\la t_mA(\e)^{-1}\left(1-\frac{\e}{\|f_{m-1}\|}\right)
+ 2\rho\left(\frac{5\la}{\|f_{m-1}\|}\right)\right),
$$
for $\quad m=1,2,\dots .$
\end{Lemma}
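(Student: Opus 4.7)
The plan is to follow the two-ingredient real-case template (Lemma~6.20 in \cite{VTbook}): first lower-bound the greedy functional $|F_{f_{m-1}}(\vi_m)|$ via duality and Lemma~\ref{LL2}, then upper-bound $\|f_m\|$ by plugging a specific element into the free-relaxation optimization and applying the smoothness estimate of Lemma~\ref{LL0}. The passage from real to complex will be absorbed by inserting a single unimodular phase factor into $\la$.

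For the lower bound, observe that $G_{m-1}$ is by construction a best approximant of $f$ from $\sp(G_{m-2},\vi_{m-1})$ (with the convention $G_0:=0$), so Lemma~\ref{LL1} gives $F_{f_{m-1}}(G_{m-1})=0$, and therefore $F_{f_{m-1}}(f)=\|f_{m-1}\|$. Together with $\|f-f^\e\|\le\e$ this yields $Re(F_{f_{m-1}}(f^\e))\ge\|f_{m-1}\|-\e$. Since $f^\e/A(\e)\in A_1(\cD)$, Lemma~\ref{LL2} then produces
$$
\|F_{f_{m-1}}\|_\cD\;\ge\;|F_{f_{m-1}}(f^\e)|/A(\e)\;\ge\;(\|f_{m-1}\|-\e)/A(\e),
$$
and the greedy selection rule gives $|F_{f_{m-1}}(\vi_m)|\ge t_m(\|f_{m-1}\|-\e)/A(\e)$.

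For the upper bound, the optimality of $(w_m,\la_m)$ implies $\|f_m\|\le\|f_{m-1}+wG_{m-1}-\la\vi_m\|$ for every $w,\la\in\bbC$. Writing $F_{f_{m-1}}(\vi_m)=re^{i\alpha}$, set $\la=\la_0 e^{-i\alpha}$ with $\la_0\ge0$ and pick $w=\la_0/A(\e)$ (real). Then $v:=wG_{m-1}-\la\vi_m$ satisfies
$$
\|v\|\;\le\;(\la_0/A(\e))\|G_{m-1}\|+\la_0\;\le\;5\la_0,
$$
using the monotonicity $\|f_{m-1}\|\le\|f\|$ (optimality with $w=0,\la=0$), the bound $\|f\|\le\e+A(\e)\le 2A(\e)$ (since $\|f^\e\|\le A(\e)$ and $\e\le A(\e)$), and hence $\|G_{m-1}\|\le 2\|f\|\le 4A(\e)$. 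Applying Lemma~\ref{LL0} to $f_{m-1}+v$ and using $F_{f_{m-1}}(G_{m-1})=0$ together with the phase identity $e^{-i\alpha}F_{f_{m-1}}(\vi_m)=|F_{f_{m-1}}(\vi_m)|$, the first-order term reduces to $-\la_0|F_{f_{m-1}}(\vi_m)|$, giving
$$
\|f_m\|\;\le\;\|f_{m-1}\|-\la_0|F_{f_{m-1}}(\vi_m)|+2\|f_{m-1}\|\,\rho(5\la_0/\|f_{m-1}\|).
$$
Inserting the lower bound from the first step, dividing by $\|f_{m-1}\|$ and taking the infimum over $\la_0\ge0$ (renamed $\la$) produces the stated inequality.

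The only step where the complex case genuinely departs from the real one is the phase rotation: in the real case the sign of $\la$ already aligns $-\la F_{f_{m-1}}(\vi_m)$ with $-|F_{f_{m-1}}(\vi_m)|$, whereas in the complex case one must rotate $\la$ by $e^{-i\alpha}$ before invoking Lemma~\ref{LL0}. The vanishing identity $F_{f_{m-1}}(G_{m-1})=0$ is phase-free (a subspace-level consequence of Lemma~\ref{LL1}), so once this rotation is fixed everything reduces to the real-case manipulation applied to real parts; this small bookkeeping is the main, and essentially only, technical subtlety.
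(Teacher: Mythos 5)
Your proof is correct, and it departs from the paper's own argument in one meaningful structural choice. The paper never invokes Lemma~\ref{LL1}: instead of establishing $F_{f_{m-1}}(G_{m-1})=0$, it writes $f=f_{m-1}+G_{m-1}$ inside the lower bound for $\|f_{m-1}-wG_{m-1}+\la\nu\varphi_m\|$ and then picks $w^*=\la t_m A(\e)^{-1}$ precisely so that the two occurrences of $\mathrm{Re}(F_{f_{m-1}}(G_{m-1}))$ cancel algebraically. You instead observe that $G_{m-1}$ is a best approximant of $f$ from the (at most two-dimensional) subspace $\sp(G_{m-2},\varphi_{m-1})$, apply Lemma~\ref{LL1} to get $F_{f_{m-1}}(G_{m-1})=0$ outright, and then any real $w$ of the right size works; the only role $w$ plays for you is to keep $\|wG_{m-1}-\la\varphi_m\|\le 5\la_0$. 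The two routes are genuinely different emphases: the paper's cancellation trick is more self-contained (it never needs the residual to be orthogonal to the relaxation subspace, only the algebraic identity $f=f_{m-1}+G_{m-1}$), which is a style that transfers better to variants such as GAWR where the relaxation is not free and the orthogonality argument is unavailable; your route front-loads a structural fact specific to WGAFR (the step-2 minimization really is a best approximation from a finite-dimensional subspace) and thereby makes the first-order bookkeeping a one-liner. Your use of Lemma~\ref{LL0} in place of the raw two-sided modulus inequality, and your phase choice $\la=\la_0 e^{-i\alpha}$, are cosmetic rephrasings of the paper's $\nu=\overline{\sign F_{f_{m-1}}(\varphi_m)}$ device; both yield the same constant $5$ inside $\rho$.
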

\begin{proof} Denote by $\nu$ the complex conjugate of $\sign F_{f_{m-1}}(\varphi_m)$, where $\sign z := z/|z|$ for $z\neq 0$. Then $\nu F_{f_{m-1}}(\varphi_m) = |F_{f_{m-1}}(\varphi_m)|$.
By the definition of $f_m$
$$
\|f_m\|\le \inf_{\la\ge0,w\in\bbC}\|f_{m-1}+wG_{m-1}-\la\nu\ff_m\|.
$$
As in the argument in the proof of Lemma \ref{LL1} we
use the inequality 
$$
\|f_{m-1}+wG_{m-1}-\la\nu\ff_m\|+\|f_{m-1}-wG_{m-1}+\la\nu\ff_m\|\le  
$$
\be\label{4.1}
2\|f_{m-1}\|(1+\rho(\|wG_{m-1}-\la\nu\ff_m\|/\|f_{m-1}\|))
\ee
and estimate for $\la\ge0$
$$
\|f_{m-1}-wG_{m-1}+\la\nu\ff_m\|\ge Re(F_{f_{m-1}}(f_{m-1}-wG_{m-1}+\la\nu\ff_m))\ge
$$
$$
\|f_{m-1}\|-Re(F_{f_{m-1}}(wG_{m-1}))+\la t_m\sup_{g\in\cD}|F_{f_{m-1}}(g)|=
$$
By Lemma \ref{LL2}, we continue:  
$$
\|f_{m-1}\|-Re(F_{f_{m-1}}(wG_{m-1}))+\la t_m\sup_{\phi\in A_1(\cD)}|F_{f_{m-1}}(\phi)|\ge
$$
$$
\|f_{m-1}\|-Re(F_{f_{m-1}}(wG_{m-1}))+\la t_m A(\e)^{-1}|F_{f_{m-1}}(f^\e)|\ge
$$
$$
\|f_{m-1}\|-Re(F_{f_{m-1}}(wG_{m-1}))+\la t_m A(\e)^{-1}(Re(F_{f_{m-1}}(f))-\e).
$$
We set $w^*:=\la t_mA(\e)^{-1}$ and obtain
\be\label{4.2}
\|f_{m-1}-w^*G_{m-1}+\la\nu\ff_m\|\ge\|f_{m-1}\|+\la t_mA(\e)^{-1}(\|f_{m-1}\|-\e). 
\ee
Combining (\ref{4.1}) and (\ref{4.2}) we get 
$$
\|f_m\| \le \|f_{m-1}\|\inf_{\la\ge0}(1-\la t_mA(\e)^{-1}(1-\e/\|f_{m-1}\|) 
$$
$$
 + 2\rho (\|w^*G_{m-1}-\la\nu\ff_m\|/\|f_{m-1}\|)).
$$
We now estimate
$$
\|w^*G_{m-1}-\la\nu\ff_m\| \le w^*\|G_{m-1}\|+\la.
$$
Next,
$$
\|G_{m-1}\|=\|f-f_{m-1}\|\le 2\|f\|\le 2(\|f^\e\|+\e)\le2(A(\e)+\e).
$$
Thus, under assumption $A(\e)\ge\e$ we get
$$
w^*\|G_{m-1}\|\le 2\la t_m(A(\e)+\e)/A(\e) \le 4\la.
$$
Finally,
$$
\|w^*G_{m-1}-\la\ff_m\|\le 5\la.
$$
This completes the proof of Lemma \ref{ML1}.
\end{proof}

\begin{Remark}\label{MR1} It follows from the definition of the WGAFR that 
the sequence $\{\|f_m\|\}$ is a non-icreasing sequence. 
\end{Remark}

We now prove a convergence theorem for an arbitrary uniformly smooth Banach space. Modulus of smoothness $\rho(u)$ of a uniformly smooth Banach space is an even convex function such that $\rho(0)=0$ and  $\lim_{u\to0}\rho(u)/u=0$. The  function $s(u):=\rho(u)/u$, $s(0):=0$, associated with $\rho(u)$ is a continuous increasing function on $[0,\infty)$. Therefore, the inverse function $s^{-1}(\cdot)$ is well defined. The following Theorem \ref{MT1}  is known  in the case of real Banach spaces (see, for instance, \cite{VTbook}, p. 352, Theorem 6.22).

\begin{Theorem}\label{MT1} Let $X$ be a uniformly smooth complex Banach space with modulus of smoothness $\rho(u)$. Assume that a sequence $\tau :=\{t_k\}_{k=1}^\infty$ satisfies the following condition. For any $\theta >0$ we have
\be\label{4.3}
\sum_{m=1}^\infty t_m  s^{-1}(\theta t_m)=\infty.  
\ee
 Then, for any $f\in X$ we have for the WGAFR
$$
\lim_{m\to \infty} \|f_m\| =0.
$$
\end{Theorem}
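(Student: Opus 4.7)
The plan is to proceed by contradiction via the recurrent inequality of Lemma \ref{ML1}. By Remark \ref{MR1} the sequence $\{\|f_m\|\}$ is non-increasing, hence converges to some limit $\alpha\ge 0$. I will assume $\alpha>0$ and derive a contradiction with the divergence hypothesis (\ref{4.3}).

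First I would choose the auxiliary element $f^\e$: since $\overline{\sp \cD}=X$, for every $\e>0$ there exist scalars $a_i$ and $g_i\in\cD$ with $\|f-\sum a_ig_i\|\le \e$; set $f^\e:=\sum a_ig_i$ and $A(\e):=\max(\sum|a_i|,\e)$, so that $A(\e)\ge\e$ and $f^\e/A(\e)\in A_1(\cD)$. Fix $\e<\alpha/2$; then $\|f_{m-1}\|\ge\alpha$ for all $m$, hence $1-\e/\|f_{m-1}\|\ge 1/2$. Lemma \ref{ML1} yields
\[
\|f_m\|\le \|f_{m-1}\|\inf_{\la\ge 0}\left(1-\frac{\la t_m}{2A(\e)}+2\rho\!\left(\frac{5\la}{\|f_{m-1}\|}\right)\right).
\]

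Next I would optimize the right-hand side by the standard substitution $v:=5\la/\|f_{m-1}\|$, which turns the bracket into $1-v\|f_{m-1}\|t_m/(10A(\e))+2\rho(v)$. Using the function $s(u):=\rho(u)/u$ and its continuous strictly increasing inverse $s^{-1}$, I pick $v=s^{-1}\!\bigl(\|f_{m-1}\|t_m/(40A(\e))\bigr)$, so that $2s(v)$ equals half of the linear coefficient. This produces the bound
\[
\|f_{m-1}\|-\|f_m\|\ge \frac{\|f_{m-1}\|^2 t_m}{20 A(\e)}\,s^{-1}\!\left(\frac{\|f_{m-1}\|t_m}{40 A(\e)}\right)\ge \frac{\alpha^2 t_m}{20 A(\e)}\,s^{-1}(\theta t_m),
\]
where $\theta:=\alpha/(40 A(\e))>0$. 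Summing the left-hand side telescopes and is bounded by $\|f_0\|=\|f\|<\infty$, while by hypothesis (\ref{4.3}) the series $\sum_m t_m s^{-1}(\theta t_m)$ diverges. This contradiction forces $\alpha=0$.

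The main obstacle, compared with the real case, is purely notational: the peak functional $F_{f_{m-1}}$ is complex-valued, so the relaxation parameters $w_m,\la_m\in\bbC$ and the phase $\nu$ in Lemma \ref{ML1} must be handled via $\mathrm{Re}$; however, Lemma \ref{ML1} has already absorbed these modifications, so the convergence argument reduces to the scalar optimization above and requires no further complex-analytic input.
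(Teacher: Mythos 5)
Your proof is correct and follows essentially the same route as the paper's: non-increasing residuals via Remark \ref{MR1}, contradiction hypothesis $\alpha>0$, choice of $f^\e$ with $A(\e)\ge\e$, Lemma \ref{ML1}, and the same optimization in $\la$ (equivalently $v$) with the same $\theta=\alpha/(40A(\e))$. The only cosmetic difference is the endgame: the paper keeps the bound multiplicative, $\|f_m\|\le\|f_{m-1}\|(1-2\theta t_m s^{-1}(\theta t_m))$, and lets the product go to zero, whereas you rewrite it additively and telescope $\sum_m(\|f_{m-1}\|-\|f_m\|)\le\|f\|$; both steps exploit the divergence in \eqref{4.3} in the standard way.
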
 
\begin{proof}  This proof only uses Lemma \ref{ML1}, which provides relation between the residuals $\|f_m\|$ and $\|f_{m-1}\|$. It repeats the corresponding proof in the real case. For completeness we present this simple proof here. 
By Remark \ref{MR1}, $\{\|f_m\|\}$ is a non-increasing sequence. Therefore we have
$$
\lim_{m\to \infty}\|f_m\| =\bt.
$$
We prove that $\bt =0$ by contradiction. Assume the contrary, that $\bt>0$. Then, for any $m$ we have
$$
\|f_m\| \ge \bt.
$$
We set $\e =\bt/2$ and find $f^\e$ such that
$$
\|f-f^\e\| \le \e \quad \text{and}\quad f^\e/A(\e) \in A_1(\cD),
$$
with some $A(\e)\ge\e$. Then, by Lemma \ref{ML1} we get
$$
\|f_m\|  \le \|f_{m-1}\|\inf_{\la\ge0} (1-\la t_mA(\e)^{-1}/2    +2\rho(5\la/\bt)).
$$
Let us specify $\theta:=\bt/(40A(\e))$ and take $\la = \bt s^{-1}(\theta t_m)/5$. Then we obtain
$$
\|f_m\| \le \|f_{m-1}\|(1-2\theta t_m s^{-1}(\theta t_m)).
$$
The assumption
$$
\sum_{m=1}^\infty t_m s^{-1}(\theta t_m) =\infty
$$
implies that
$$
\|f_m\| \to 0 \quad \text{as} \quad m\to \infty.
$$
We have a contradiction, which proves the theorem.
\end{proof} 

\begin{Remark}\label{NC1} The reader can find some necessary conditions on the weakness sequence $\tau$ for 
convergence of the WCGA, for instance, in \cite{VTbook}, Proposition 6.13, p. 346.
\end{Remark}

We now proceed to the rate of convergence results. The following Theorem \ref{MT2}  is known  in the case of real Banach spaces (see, for instance, \cite{VTbook}, p. 353, Theorem 6.23).

\begin{Theorem}\label{MT2} Let $X$ be a uniformly smooth Banach space with modulus of smoothness $\rho(u)\le \gamma u^q$, $1<q\le 2$. Take a number $\e\ge 0$ and two elements $f$, $f^\e$ from $X$ such that
$$
\|f-f^\e\| \le \e,\quad
f^\e/A(\e) \in A_1(\cD),
$$
with some number $A(\e)>0$.
Then we have for the residual of the WGAFR after $m$ iterations
$$
\|f_m\| \le  \max\left(2\e, C(q,\gamma)(A(\e)+\e)\left(1+\sum_{k=1}^mt_k^p\right)^{-1/p}\right),\quad p:=q/(q-1). 
$$
\end{Theorem}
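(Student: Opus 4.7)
The plan is to substitute the polynomial bound $\rho(u) \le \gamma u^q$ into the one-step recurrence of Lemma~\ref{ML1}, carry out a one-variable minimization in $\la$, and iterate the resulting inequality by raising to the power $-p$. Two preliminary reductions let me restrict to the nontrivial regime. First, if $A(\e) < \e$, then $f^\e/A(\e) \in A_1(\cD)$ forces $\|f^\e\| \le A(\e) < \e$ (since elements of $A_1(\cD)$ have norm at most $1$), so $\|f\| < 2\e$, and Remark~\ref{MR1} makes $\|f_m\| \le 2\e$ automatic; hence I may assume $A(\e)\ge\e$, which is the hypothesis of Lemma~\ref{ML1}. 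Second, once $\|f_{m-1}\| \le 2\e$ at some step, Remark~\ref{MR1} propagates this to every subsequent residual, so it suffices to treat the regime $\|f_{m-1}\| > 2\e$. In that regime $1 - \e/\|f_{m-1}\| > 1/2$, and after changing variables via $u = 5\la/\|f_{m-1}\|$, Lemma~\ref{ML1} and $\rho(u)\le\gamma u^q$ give
\[
\|f_m\| \le \|f_{m-1}\|\inf_{u \ge 0}\Bigl(1 - \frac{u\,t_m\|f_{m-1}\|}{10\,A(\e)} + 2\gamma u^q\Bigr).
\]
A calculus minimization in $u$ (critical point $u \propto (t_m\|f_{m-1}\|/A(\e))^{1/(q-1)}$) then yields the multiplicative recurrence
\[
\|f_m\| \le \|f_{m-1}\|\bigl(1 - C_1(q,\gamma)\,t_m^p\,(\|f_{m-1}\|/A(\e))^p\bigr), \qquad p = q/(q-1).
\]

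The main step is to convert this multiplicative form into a telescoping additive one by raising both sides to the power $-p$ and applying the convexity bound $(1-x)^{-p} \ge 1 + px$ valid on $[0,1)$. This bound is applicable because the bracket must lie in $(0,1]$: if it were $\le 0$ the recurrence would force $\|f_m\| \le 0$, contradicting the standing assumption $\|f_m\| > 2\e$. After the $\|f_{m-1}\|^p$ factors cancel one obtains the clean telescoping inequality
\[
\|f_m\|^{-p} \ge \|f_{m-1}\|^{-p} + pC_1\,t_m^p/A(\e)^p.
\]
Summing from $k = 1$ to $m$, bounding $\|f_0\| \le \|f^\e\| + \e \le A(\e)+\e$, and relaxing $A(\e)^{-p} \ge (A(\e)+\e)^{-p}$ to factor out a common denominator yields
\[
\|f_m\|^{-p} \ge (A(\e)+\e)^{-p}\Bigl(1 + pC_1\sum_{k=1}^m t_k^p\Bigr),
\]
from which the stated bound follows by taking $(-1/p)$-th powers and absorbing $pC_1$ into a single constant $C(q,\gamma)$.

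The main difficulty is purely bookkeeping: tracking the constant $C_1(q,\gamma)$ through the $\la$-optimization (it contains factors like $(2\gamma q)^{-(p-1)}/p$ that must be handled carefully as $q \to 1$ and $p \to \infty$) and verifying at each step that the regime where $(1-x)^{-p} \ge 1+px$ applies is exactly the regime where the one-step estimate is nontrivial. There is no new complex-specific obstruction at this stage, since Lemma~\ref{ML1} has already absorbed all the $\mathrm{Re}(\cdot)$ modifications; the iteration scheme itself reproduces the real-case proof from \cite{VTbook} essentially verbatim.
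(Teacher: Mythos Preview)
Your proposal is correct and follows essentially the same route as the paper: the same two reductions ($A(\e)\ge\e$ and $\|f_k\|>2\e$), the same application of Lemma~\ref{ML1} with $\rho(u)\le\gamma u^q$, and the same one-variable optimization leading to the multiplicative recurrence $\|f_k\|\le\|f_{k-1}\|(1-c\,t_k^p\|f_{k-1}\|^p/A(\e)^p)$. The only cosmetic difference is in how that recurrence is iterated: the paper first raises to the $p$th power (via $(1-x)^p\le 1-x$) and then invokes the standard sequence lemma $x_k\le x_{k-1}(1-a_kx_{k-1})\Rightarrow x_m^{-1}\ge x_0^{-1}+\sum a_k$, whereas you raise directly to the $-p$th power using $(1-x)^{-p}\ge 1+px$ and telescope; these are the same computation in a different order.
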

\begin{proof} This proof only uses Lemma \ref{ML1}, which provides relation between the residuals $\|f_m\|$ and $\|f_{m-1}\|$. It repeats the corresponding proof in the real case. For completeness we present this proof here.  It is clear that it suffices to consider the case $A(\e)\ge\e$. Otherwise, $\|f_m\|\le\|f\|\le\|f^\e\|+\e\le2\e$. 
Also, assume $\|f_m\|>2\e$ (otherwise Theorem \ref{MT2} trivially holds). Then, by Remark \ref{MR1} we have for all $k=0,1,\dots,m$ that $\|f_k\|>2\e$. 
By Lemma \ref{ML1} we obtain
\be\label{4.4}
\|f_k\| \le \|f_{k-1}\|\inf_{\la\ge0} \left(1-\la t_kA(\e)^{-1}/2 + 2\gamma\left(\frac{5\la}{\|f_{k-1}\|}\right)^q\right).
\ee
Choose $\la$ from the equation
$$
\frac{\la t_k}{4A(\e)} = 2\gamma \left(\frac{5\la}{\|f_{k-1}\|}\right)^q,
$$
which implies that
$$
\la = \|f_{k-1}\|^{\frac{q}{q-1}}5^{-\frac{q}{q-1}}(8\gamma A(\e))^{-\frac{1}{q-1}}t_k^{\frac{1}{q-1}}.
$$
Define 
$$
A_q := 4(8\gamma)^{\frac{1}{q-1}}5^{\frac{q}{q-1}}.
$$
Using notation $p:= \frac{q}{q-1}$, we get from (\ref{4.4})
$$
\|f_k\| \le \|f_{k-1}\|\left(1-\frac{1}{4}\frac{\la t_k}{A(\e)}\right) = \|f_{k-1}\|\left(1-\frac{t_k^p\|f_{k-1}\|^p}{A_qA(\e)^p}\right).
$$
Raising both sides of this inequality to the power $p$ and taking into account the inequality $x^r\le x$ for $r\ge 1$, $0\le x\le 1$, we obtain
\be\label{M4}
\|f_k\|^p \le \|f_{k-1}\|^p \left(1-\frac{t^p_k\|f_{k-1}\|^p}{A_qA(\e)^p}\right).
\ee
We shall need the
following simple lemma, different versions of which are well known (see, for example, \cite{VTbook}, p. 91). 

\begin{Lemma}\label{HL1} Let a number $C_1>0$ and a sequence $\{a_k\}_{k=1}^\infty$, $a_k >0$, $k=1,2,\dots$, be given.
Assume that $\{x_m\}_{m=0}^\infty$
is a sequence of non-negative
 numbers satisfying the inequalities
$$
x_0 \le C_1, \quad x_{m+1} \le x_m(1 - x_m a_{m+1}) , \quad m = 1,2, \dots,\quad C_1,C_2>0 .
$$
Then we have for each $m$
$$
x_m \le \left(C_1^{-1}+\sum_{k=1}^{m} a_k\right)^{-1} .
$$
\end{Lemma}
\begin{proof} The proof is by induction on $m$. For $m = 0$ the statement
is true by assumption. We prove that
 $$
 x_m \le \left(C_1^{-1}+\sum_{k=1}^{m} a_k\right)^{-1}\quad \text{implies} \quad   x_{m+1} \le
\left(C_1^{-1}+\sum_{k=1}^{m+1} a_k\right)^{-1}.
$$
 If $x_{m+1} = 0$ this statement is obvious. Assume therefore
that $x_{m+1} > 0$. Then we have 
$$
x_{m+1}^{-1} \ge x_m^{-1}(1 - x_m a_{m+1})^{-1} \ge x_m^{-1}(1 + x_m a_{m+1}) =
x_m^{-1} + a_{m+1} \ge C_1^{-1}+\sum_{k=1}^{m+1} a_k ,
$$
which proves the required inequality. 
\end{proof}

We use Lemma \ref{HL1} with $x_k := \|f_k\|^p$. Using the estimate $\|f\| \le A(\e)+\e$, we set $C_1:= A(\e)+\e$. 
We specify $a_k := t^p_k(A_qA(\e)^p)^{-1}$. Then (\ref{M4}) guarantees that we can apply Lemma \ref{HL1}. Note that $A_q>1$. Then Lemma \ref{HL1} gives
\be\label{M5}
\|f_m\|^p \le A_q(A(\e)+\e)^p\left(1+\sum_{k=1}^m t_k^p\right)^{-1},
\ee
which implies
$$
\|f_m\|\le C(q,\gamma)(A(\e)+\e)\left(1+\sum_{k=1}^m t_k^p\right)^{-1/p}.
$$
Theorem \ref{MT2} is proved.
\end{proof}

\subsection{GAWR}
\label{MSub2}

In this subsection we study the  Greedy Algorithm with Weakness parameter $t$ and Relaxation $\br$ (GAWR($t,\br$)),
$\br := \{r_j\}_{j=1}^\infty$, $r_j\in [0,1)$, $j=1,2,\dots$. In addition to the acronym  GAWR($t,\br$) we will use the abbreviated acronym GAWR for the name of this algorithm. We give a general definition of the algorithm in the case of a weakness sequence $\tau$.

{\bf Greedy Algorithm with Weakness and Relaxation GAWR($\tau,\br$).} 
Let $\tau:=\{t_m\}_{m=1}^\infty$, $t_m\in[0,1]$, be a weakness  sequence. We define $f_0   :=f$ and $G_0  := 0$. Then for each $m\ge 1$ we inductively define

1). $\varphi_m   \in \cD$ is any satisfying
$$
|F_{f_{m-1}}(\varphi_m)| \ge t_m  \| F_{f_{m-1}}\|_\cD.
$$

2). Find $ \lambda_m \in \bbC$ such that
$$
\|f-((1-r_m)G_{m-1} + \la_m\varphi_m)\| = \inf_{ \la\in \bbC}\|f-((1-r_m)G_{m-1} + \la\varphi_m)\|
$$
and define
$$
G_m:=   (1-r_m)G_{m-1} + \la_m\varphi_m.
$$

3). Denote
$$
f_m   := f-G_m.
$$

We now prove convergence and rate of convergence results for the \newline
GAWR($t,\br$), i.e. for the GAWR($\tau,\br$) with $\tau =\{t_j\}_{j=1}^\infty$, $t_j=t \in (0,1]$, $j=1,2,\dots$.   We begin with an analogue of Lemma \ref{ML1}. In the case of real Banach spaces Lemma \ref{ML3} is known (see, for instance, \cite{VTbook}, p. 355, Lemma 6.24).

\begin{Lemma}\label{ML3} Let $X$ be a uniformly smooth complex Banach space with modulus of smoothness $\rho(u)$. Take a number $\e\ge 0$ and two elements $f$, $f^\e$ from $X$ such that
$$
\|f-f^\e\| \le \e,\quad
f^\e/A(\e) \in A_1(\cD),
$$
with some number $A(\e)>0$.
Then we have for the GAWR($t,\br$) 
$$
\|f_m\| \le \|f_{m-1}\|\left(1- r_m\left(1-\frac{\e}{\|f_{m-1}\|}\right) + 2\rho\left(\frac{r_m(\|f\|+A(\e)/t)}{(1-r_m)\|f_{m-1}\|}\right)\right),
$$
 for $m=1,2,\dots $.
\end{Lemma}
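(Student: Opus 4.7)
My plan is to parallel the proof of Lemma~\ref{ML1}, adjusted to the GAWR setting in which the relaxation parameter $r_m$ is prescribed rather than co-optimized with $\la_m$. The first step is to rewrite the residual, using $G_{m-1}=f-f_{m-1}$, as $f_m = (1-r_m)f_{m-1} + r_m f - \la_m\varphi_m$. Since $\la_m\in\bbC$ is chosen optimally, $\|f_m\|$ may be upper-bounded by plugging in any convenient $\la$; the useful choice is $\la = r_m\mu\nu$, where $\mu\ge 0$ will be specified below and $\nu$ is the complex conjugate of $\sign F_{f_{m-1}}(\varphi_m)$, so that $\nu F_{f_{m-1}}(\varphi_m)=|F_{f_{m-1}}(\varphi_m)|$, exactly as in the proofs of Lemmas~\ref{LL1} and~\ref{ML1}.

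Next I would apply Lemma~\ref{LL0} with $x=(1-r_m)f_{m-1}$ and $uy = r_m(f-\mu\nu\varphi_m)$. Since $(1-r_m)f_{m-1}$ is a positive scalar multiple of $f_{m-1}$, we may take $F_x = F_{f_{m-1}}$, and Lemma~\ref{LL0} then gives
\[
\|f_m\|\le (1-r_m)\|f_{m-1}\| + r_m\, Re(F_{f_{m-1}}(f-\mu\nu\varphi_m)) + 2(1-r_m)\|f_{m-1}\|\rho\!\left(\frac{r_m\|f-\mu\nu\varphi_m\|}{(1-r_m)\|f_{m-1}\|}\right).
\]
The factor $1/(1-r_m)$ inside $\rho$ matches the statement and arises structurally from the base point having norm $(1-r_m)\|f_{m-1}\|$; this is precisely the feature that distinguishes the GAWR estimate from the WGAFR one.

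For the linear term I would bound $Re(F_{f_{m-1}}(f))\le |F_{f_{m-1}}(f^\e)|+\e$, then invoke Lemma~\ref{LL2} applied to $f^\e/A(\e)\in A_1(\cD)$ to obtain $|F_{f_{m-1}}(f^\e)|\le A(\e)\|F_{f_{m-1}}\|_\cD$, together with the weak-greedy selection rule $|F_{f_{m-1}}(\varphi_m)|\ge t\|F_{f_{m-1}}\|_\cD$. Choosing $\mu = A(\e)/t$ cancels the $\|F_{f_{m-1}}\|_\cD$ contributions and leaves $Re(F_{f_{m-1}}(f-\mu\nu\varphi_m))\le \e$. For the argument of $\rho$ the triangle inequality gives $\|f-\mu\nu\varphi_m\|\le \|f\|+A(\e)/t$, and monotonicity of $\rho$ lets me replace the argument by the one in the statement. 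Dividing through by $\|f_{m-1}\|$ and using $2(1-r_m)\le 2$ produces the claimed recurrence. I do not anticipate a substantive obstacle: as in Lemma~\ref{ML1}, the only complex-case subtlety is replacing sign flips by multiplication by the unit-modulus factor $\nu$ and working consistently with real parts, both of which Lemma~\ref{LL0} already accommodates. The one point requiring care is the alignment of scales in the second step, which is the reason for normalizing the test perturbation as $r_m\mu\nu$ rather than $\mu\nu$ -- this is what makes the linear term proportional to $r_m$ and thereby matches the leading $-r_m(1-\e/\|f_{m-1}\|)$ in the target bound.
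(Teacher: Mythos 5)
Your proposal is correct and follows essentially the same path as the paper's proof: the same decomposition $f_m=(1-r_m)f_{m-1}+r_mf-\la\varphi_m$, the same unit-modulus factor $\nu$, the same test value $\la^*=r_mA(\e)/t$, Lemma~\ref{LL2} for the linear term, and the same triangle-inequality bound $\|f\|+A(\e)/t$ inside $\rho$. The only cosmetic difference is that you invoke Lemma~\ref{LL0} (with base point $(1-r_m)f_{m-1}$) to package the modulus-of-smoothness-plus-peak-functional step that the paper carries out inline.
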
 
\begin{proof} 
As above in the proof of Lemma \ref{ML1}, denote by $\nu$ the complex conjugate of $\sign F_{f_{m-1}}(\varphi_m)$, where $\sign z := z/|z|$ for $z\neq 0$. Then $\nu F_{f_{m-1}}(\varphi_m) = |F_{f_{m-1}}(\varphi_m)|$.
By the definition of $f_m$ 
$$
\|f_m\|\le \inf_{\la\ge0}\|f-((1-r_m)G_{m-1}+\la\nu\ff_m)\|.
$$
We have for any $\lambda$
\be\label{M2.3}
f-((1-r_m)G_{m-1}+\la\nu \varphi_m) = (1-r_m)f_{m-1}+r_mf-\la\nu \ff_m  
\ee
and from the definition of $\rho(u)$ with $u=\frac{\|r_mf- \la\nu\ff_m\|}{(1-r_m)\|f_{m-1}\|}$ obtain
$$
\|(1-r_m)f_{m-1}+r_mf-\la\nu \ff_m\|+\|(1-r_m)f_{m-1}-r_mf+\la\nu \ff_m\|\le  
$$
\be\label{M2.4}
2(1-r_m)\|f_{m-1}\|\left(1+\rho\left(\frac{\|r_mf- \la\nu\ff_m\|}{(1-r_m)\|f_{m-1}\|}\right)\right).
\ee
We have  
$$
\|(1-r_m)f_{m-1}-r_mf+\la \nu\ff_m\|\ge Re(F_{f_{m-1}}((1-r_m)f_{m-1}-r_mf+\la\nu \ff_m))= 
$$
\be\label{M2.5}
(1-r_m)\|f_{m-1}\|-r_mRe(F_{f_{m-1}}(f))+\la Re(\nu F_{f_{m-1}}(\ff_m)).  
\ee
From the definition of $\ff_m$ we get
\be\label{M2.6}
\nu F_{f_{m-1}}(\ff_m) = |F_{f_{m-1}}(\ff_m)| \ge t\sup_{g\in\cD}|F_{f_{m-1}}(g)|. 
\ee
By Lemma \ref{LL2}   we obtain
$$
\sup_{g\in\cD} |F_{f_{m-1}}(g)| = \sup_{\phi\in A_1(\cD)}|F_{f_{m-1}}(\phi)| 
$$
\be\label{M2.7}
 \ge 
A(\e)^{-1}|F_{f_{m-1}}(f^\e)|\ge A(\e)^{-1}(|F_{f_{m-1}}(f)|-\e).  
\ee
Combining (\ref{M2.6}) and (\ref{M2.7}) we get
\be\label{M2.8}
|F_{f_{m-1}}(\ff_m)|\ge tA(\e)^{-1}(|F_{f_{m-1}}(f)|-\e).  
\ee
We now choose $\la:=\la^*:=r_mA(\e)/t$. Then for this $\la$ we derive from (\ref{M2.5}) and (\ref{M2.8})
\be\label{M2.9}
\|(1-r_m)f_{m-1}-r_mf+\la^*\nu \ff_m\|\ge (1-r_m)\|f_{m-1}\|-r_m\e.  
\ee
The relations (\ref{M2.4}) and (\ref{M2.9}) imply
$$
\|(1-r_m)f_{m-1}+r_mf-\la^*\nu \ff_m\|\le 
$$
$$
(1-r_m)\|f_{m-1}\|+r_m\e+
2(1-r_m)\|f_{m-1}\|\rho\left(\frac{r_m(\|f\|+A(\e)/t)}{(1-r_m)\|f_{m-1}\|}\right). 
$$
\end{proof}

We begin with a convergence result. It is known that the version of Lemma \ref{ML3} for the real case implies the corresponding convergence result -- Theorem \ref{MT3} below -- in the real case (see, for instance, \cite{VTbook}, pp. 355--357). That same proof derives Theorem \ref{MT3} from Lemma \ref{ML3}. We do not present it here. 

 \begin{Theorem}\label{MT3} Let a sequence $\br:=\{r_k\}_{k=1}^\infty$, $r_k\in[0,1)$, satisfy the conditions
$$
\sum_{k=1}^\infty r_k =\infty,\quad r_k\to 0\quad\text{as}\quad k\to\infty.
$$
Then the GAWR($t,\br$)   converges in any uniformly smooth Banach space for each $f\in X$ and for all dictionaries $\cD$.
\end{Theorem}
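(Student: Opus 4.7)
The plan is to deduce the theorem from the recurrent estimate in Lemma \ref{ML3} by a contradiction argument, following the real-case scheme in \cite{VTbook}. Fix $f\in X$ and suppose, for contradiction, that $\alpha := \limsup_{m}\|f_m\| > 0$. Set $\e := \alpha/8$ and pick $f^\e\in\sp(\cD)$ with $\|f-f^\e\|\le\e$; writing $f^\e$ as a finite combination of dictionary elements yields a constant $A(\e)>0$ with $f^\e/A(\e)\in A_1(\cD)$. Put $B := \|f\|+A(\e)/t$.

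From the definition of GAWR, taking the trial value $\la=0$ in step 2 gives the \emph{jump bound}
\begin{equation*}
\|f_m\|\le \|f-(1-r_m)G_{m-1}\| \le \|f_{m-1}\|+r_m\|G_{m-1}\| \le \|f_{m-1}\|(1+r_m)+r_m\|f\|.
\end{equation*}
Whenever $\|f_{m-1}\|\ge\alpha/4$ we have $\e/\|f_{m-1}\|\le 1/2$ and the argument of $\rho$ in Lemma \ref{ML3} is at most $2r_mB/\|f_{m-1}\|$; using $\rho(u)/u\to 0$ as $u\to 0^+$ together with $r_m\to 0$, there exists $m_1$ such that the $\rho$-term is bounded by $r_m/4$ for all $m\ge m_1$ with $\|f_{m-1}\|\ge\alpha/4$, yielding the \emph{contraction bound}
\begin{equation*}
\|f_m\|\le \|f_{m-1}\|(1-r_m/4).
\end{equation*}

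A case analysis closes the argument. If $\|f_{m-1}\|\ge\alpha/4$ for every $m\ge m_1$, iterating the contraction bound gives $\|f_m\|\le\|f_{m_1}\|\prod_{k=m_1+1}^m(1-r_k/4)\to 0$ because $\sum_kr_k=\infty$, contradicting $\|f_m\|\ge\alpha/4$. Otherwise the trajectory enters the region $\{\|f_{m-1}\|\ge\alpha/4\}$ in maximal ``excursions''; on each excursion $[a,b]$ preceded by $\|f_{a-1}\|<\alpha/4$, the jump bound gives $\|f_a\|\le\alpha/4+r_a(\|f\|+\alpha/4)$, and the contraction bound propagates $\|f_m\|\le\|f_a\|$ throughout. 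If there are only finitely many such excursions, then $\|f_m\|<\alpha/4$ eventually, so $\limsup_m\|f_m\|\le\alpha/4<\alpha$; if there are infinitely many, their start indices $a\to\infty$, whence $r_a\to 0$ and the excursion maxima tend to $\alpha/4$. Either alternative contradicts $\limsup_m\|f_m\|=\alpha$.

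The main obstacle is passing from the qualitative fact $\rho(u)/u\to 0$ to the uniform contraction factor $1-r_m/4$ over the whole region $\{\|f_{m-1}\|\ge\alpha/4\}$: one must verify that the required smallness of $u=2r_mB/\|f_{m-1}\|$ follows from $r_m\to 0$ alone, independently of how large $\|f_{m-1}\|$ may be. A secondary subtlety is the transition back into the contraction region, where the $\la=0$ jump bound is essential because Lemma \ref{ML3} is informative only when $\|f_{m-1}\|$ is comparable to or larger than $\e$.
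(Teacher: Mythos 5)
Your proof is correct and follows the same scheme the paper invokes (it defers Theorem~\ref{MT3} to the real-case argument in \cite{VTbook}, pp.~355--357, which carries over verbatim once Lemma~\ref{ML3} is available): a geometric contraction $\|f_m\|\le\|f_{m-1}\|(1-r_m/4)$ on $\{\|f_{m-1}\|\ge\alpha/4\}$ extracted from the recurrent inequality using $r_m\to0$ and $\rho(u)=o(u)$, combined with the $\la=0$ jump bound $\|f_m\|\le(1+r_m)\|f_{m-1}\|+r_m\|f\|$ and $\sum_k r_k=\infty$. The subtlety you flag at the end is not in fact a gap: the lower bound $\|f_{m-1}\|\ge\alpha/4$ yields the upper bound $8r_mB/\alpha$ on the $\rho$-argument, which tends to $0$ uniformly in $\|f_{m-1}\|$ as $r_m\to0$, since larger $\|f_{m-1}\|$ only makes that argument smaller.
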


The situation with the rate of convergence results is very similar to the above described situation with the convergence result. The real case proof (see, for instance, \cite{VTbook}, pp. 357--358) derives Theorem \ref{MT4} from Lemma \ref{ML3}. We do not present it here. 

\begin{Theorem}\label{MT4} Let $X$ be a uniformly smooth Banach space with modulus of smoothness $\rho(u)\le \gamma u^q$, $1<q\le 2$. Let 
$\br:=\{2/(k+2)\}_{k=1}^\infty$. Consider the GAWR($t,\br$). For a pair of functions $f$, $f^\e$, satisfying
$$
\|f-f^\e\|\le \e,\quad f^\e/A(\e)\in A_1(\cD)
$$
we have
$$
\|f_m\|\le \e+C(q,\gamma)(\|f\|+A(\e)/t)m^{-1+1/q}.
$$
\end{Theorem}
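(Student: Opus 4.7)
The plan is to specialize Lemma \ref{ML3} to the hypotheses $\rho(u)\le \gamma u^q$ and $r_m = 2/(m+2)$, obtaining a concrete scalar recurrence for $\|f_m\|$, and then to extract the rate by induction on $m$. Since Lemma \ref{ML3} has the same algebraic form as its real-case counterpart, the analytical step is identical to the real-case proof referenced in \cite{VTbook}.

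Setting $B := \|f\| + A(\e)/t$, Lemma \ref{ML3} combined with $\rho(u)\le \gamma u^q$ gives
$$
\|f_m\| \le (1-r_m)\|f_{m-1}\| + r_m \e + 2\gamma (r_m B)^q (1-r_m)^{1-q}\|f_{m-1}\|^{1-q}.
$$
With $r_m = 2/(m+2)$ one has $1-r_m = m/(m+2)\in[1/3,1]$ for $m\ge 1$, so $(1-r_m)^{1-q}$ is absorbed into a constant depending only on $q$, and $r_m^q\le 2^q(m+2)^{-q}$. Subtracting $\e$ from both sides and using the identity $(1-r_m)\e + r_m\e = \e$, the recurrence becomes, whenever $\|f_m\|>\e$,
$$
\|f_m\|-\e \le (1-r_m)(\|f_{m-1}\|-\e) + C_1(q,\gamma)\, B^q (m+2)^{-q}\|f_{m-1}\|^{1-q}.
$$

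I would then prove by induction on $m$ that $\|f_m\|\le \e+C(q,\gamma)B\, m^{-(1-1/q)}$, treating the case $\|f_m\|\le\e$ trivially. The inductive step splits according to the size of $\|f_{m-1}\|$: when $\|f_{m-1}\|\gtrsim Bm^{-(1-1/q)}$, the contraction factor $1-r_m = m/(m+2)$ alone sharpens the bound by the correct amount, since $\|f_{m-1}\|^{1-q}$ is then small enough that the perturbation is of lower order; when $\|f_{m-1}\|$ is below this threshold, the inductive hypothesis at index $m-1$ already controls the entire right-hand side. The main obstacle, inherited from the real case, is calibrating the constant $C(q,\gamma)$ so that the two regimes close simultaneously, a subtlety caused precisely by the fact that the exponent $1-q$ is negative and so $\|f_{m-1}\|^{1-q}$ blows up as $\|f_{m-1}\|\to 0$. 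This calibration is carried out in \cite{VTbook}, pp.~357--358, and transfers verbatim from the real to the complex case once the recurrence above is in hand.
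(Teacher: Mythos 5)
Your overall strategy matches the paper's: the authors simply remark that Theorem~\ref{MT4} follows from Lemma~\ref{ML3} ``exactly as in the real case'' (\cite{VTbook}, pp.\ 357--358) and give no further detail, and you likewise reduce everything to the specialization of Lemma~\ref{ML3}. The first step of your sketch -- plugging $\rho(u)\le\gamma u^q$ and $r_m=2/(m+2)$ into Lemma~\ref{ML3} and subtracting $\e$ -- is correct, up to a harmless slip: the factor should be $(1-r_m)^{-q}$, not $(1-r_m)^{1-q}$ (both are bounded between $1$ and $3^q$ here, so nothing changes).

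The description of the inductive step, however, contains a genuine misstatement in the ``small $\|f_{m-1}\|$'' case. You write that when $\|f_{m-1}\|$ is below the threshold ``the inductive hypothesis at index $m-1$ already controls the entire right-hand side.'' It does not, through the recurrence you wrote: the perturbation $C_1 B^q(m+2)^{-q}\|f_{m-1}\|^{1-q}$ has exponent $1-q<0$, so it \emph{blows up} as $\|f_{m-1}\|\to 0$, and the right-hand side of your recurrence is not even monotone in $\|f_{m-1}\|$; plugging in the inductive upper bound on $\|f_{m-1}\|$ therefore produces nothing useful in this regime. What actually closes the small-residual case is a \emph{different}, cruder bound that does not come from the $\gamma u^q$ estimate. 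For instance, taking $\la=0$ in step (2) of the GAWR gives
$$
\|f_m\|\le\|(1-r_m)f_{m-1}+r_m f\|\le(1-r_m)\|f_{m-1}\|+r_m\|f\|\le\|f_{m-1}\|+\tfrac{2B}{m},
$$
a bounded increment (one gets the same order from $\rho(u)\le u$). It is this trivial increment bound, combined with the contraction from your recurrence when $\|f_{m-1}\|$ is large, that makes the two-regime argument (formalized as Lemma~\ref{ML4}, the technical lemma from \cite{VTbook}, p.\ 357) close. So your deferral to the book is justified, but the prose explanation of the small case, as written, would not survive an attempt to execute it.
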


\subsection{Incremental Algorithm}
\label{MSub3}

We proceed to  one more greedy type algorithm (see \cite{VT94}).  
Let $\e=\{\e_n\}_{n=1}^\infty $, $\e_n> 0$, $n=1,2,\dots$. The following greedy algorithm was introduced and 
studied in \cite{VT94} (see also, \cite{VTbook}, pp. 361--363) in the case of real Banach spaces.

 {\bf Incremental Algorithm with schedule $\e$ (IA($\e$)).} 
Let $f\in \conv(\cD)$. Denote $f_0^{i,\e}:= f$ and $G_0^{i,\e} :=0$. Then, for each $m\ge 1$ we have the following inductive definition.

(1) $\ff_m^{i,\e} \in \cD$ is any element satisfying
$$
F_{f_{m-1}^{i,\e}}(\ff_m^{i,\e}-f) \ge -\e_m.
$$

(2) Define
$$
G_m^{i,\e}:= (1-1/m)G_{m-1}^{i,\e} +\ff_m^{i,\e}/m.
$$

(3) Let
$$
f_m^{i,\e} := f- G_m^{i,\e}.
$$
 
It is clear from the definition of the {\bf IA($\e$)} that 
\be\label{M14}
G_m^{i,\e} = \frac{1}{m} \sum_{j=1}^m \ff_j^{i,\e}. 
\ee
 
 We now modify the definition of {\bf IA($\e$)} to make it suitable for the complex Banach spaces. 
 
 {\bf Incremental Algorithm (complex) with schedule $\e$ (IAc($\e$)).} 
Let $f\in A_1(\cD)$. Denote $f_0^{c,\e}:= f$ and $G_0^{c,\e} :=0$. Then, for each $m\ge 1$ we have the following inductive definition.

(1) $\ff_m^{c,\e} \in \cD^\circ$ is any element satisfying
$$
Re(F_{f_{m-1}^{c,\e}}(\ff_m^{c,\e}-f)) \ge -\e_m.
$$
Denote by $\nu_m$ the complex conjugate of $\sign F_{f_{m-1}^{c,\e}}(\ff_m^{c,\e})$, where $\sign z := z/|z|$ for $z\neq 0$.

(2) Define
$$
G_m^{c,\e}:= (1-1/m)G_{m-1}^{c,\e} +\nu_m\ff_m^{c,\e}/m.
$$

(3) Let
$$
f_m^{c,\e} := f- G_m^{c,\e}.
$$

It follows from the definition of the {\bf IAc($\e$)} that 
\be\label{M15}
G_m^{c,\e} = \frac{1}{m} \sum_{j=1}^m \nu_j\ff_j^{c,\e},\quad |\nu_j|=1, \quad j=1,2,\dots. 
\ee

Note that by the definition of $\nu_m$ we have $F_{f_{m-1}^{c,\e}}(\nu_m\ff_m^{c,\e}) = |F_{f_{m-1}^{c,\e}}(\ff_m^{c,\e})|$ 
and, therefore, by Lemma \ref{LL2}
$$
\sup_{\phi \in \cD^\circ}Re(F_{f_{m-1}^{c,\e}}(\phi))= \sup_{g \in \cD}|F_{f_{m-1}^{c,\e}}(g)| 
$$
$$
= \sup_{\phi \in A_1(\cD)}|F_{f_{m-1}^{c,\e}}(\phi)| \ge |F_{f_{m-1}^{c,\e}}(f)| \ge Re(F_{f_{m-1}^{c,\e}}(f)).
$$
This means that we can always run the IAc($\e$) for $f\in A_1(\cD)$.
 
\begin{Theorem}\label{MT5} Let $X$ be a uniformly smooth complex Banach space with  modulus of smoothness $\rho(u)\le \gamma u^q$, $1<q\le 2$. Define
$$
\e_n := K_1\gamma ^{1/q}n^{-1/p},\qquad p=\frac{q}{q-1},\quad n=1,2,\dots .
$$
Then, for any $f\in  A_1(\cD)$ we have
$$
\|f_m^{c,\e}\| \le C(K_1) \gamma^{1/q}m^{-1/p},\qquad m=1,2\dots.
$$
\end{Theorem}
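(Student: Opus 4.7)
The plan is to imitate the real-case proof (cf. \cite{VTbook}, pp. 361--363) by first deriving a one-step recurrence for $\|f_m^{c,\e}\|$ from Lemma \ref{LL0}, and then closing an induction to obtain the claimed rate. All the technicalities for passing from real to complex spaces are already absorbed into the inequality of Lemma \ref{LL0}, so most of the argument is algebraic.

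First I would rewrite the iteration as
\[
f_m^{c,\e} \;=\; \Bigl(1-\tfrac{1}{m}\Bigr)f_{m-1}^{c,\e} \;+\; \tfrac{1}{m}\bigl(f-\nu_m\varphi_m^{c,\e}\bigr),
\]
apply Lemma \ref{LL0} with $x:=(1-1/m)f_{m-1}^{c,\e}$ (so $F_x=F_{f_{m-1}^{c,\e}}$) and $y:=(f-\nu_m\varphi_m^{c,\e})/m$, $u=1$, and use the bound $\rho(u)\le \gamma u^q$. The norming identity gives $\nu_m F_{f_{m-1}^{c,\e}}(\varphi_m^{c,\e})=|F_{f_{m-1}^{c,\e}}(\varphi_m^{c,\e})|\ge Re(F_{f_{m-1}^{c,\e}}(\varphi_m^{c,\e}))$, so the selection criterion $Re(F_{f_{m-1}^{c,\e}}(\varphi_m^{c,\e}-f))\ge -\e_m$ yields
\[
Re\bigl(F_{f_{m-1}^{c,\e}}(f-\nu_m\varphi_m^{c,\e})\bigr)\le \e_m.
\]
Combined with $\|f-\nu_m\varphi_m^{c,\e}\|\le 2$ (valid since $f\in A_1(\cD)$ and $\varphi_m^{c,\e}\in\cD^\circ$), this produces the recurrence
\[
\|f_m^{c,\e}\| \;\le\; \Bigl(1-\tfrac{1}{m}\Bigr)\|f_{m-1}^{c,\e}\| \;+\; \frac{\e_m}{m} \;+\; \frac{C\gamma\, m^{-q}}{\|f_{m-1}^{c,\e}\|^{\,q-1}},
\]
for an absolute constant $C$ depending only on $q$.

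Next I would prove by induction on $m$ the bound $\|f_m^{c,\e}\|\le B\gamma^{1/q}m^{-1/p}$ with $B=B(K_1)$ chosen at the end. The base case $m=1$ is easy from $\|f\|\le 1$, after fixing $B$ large. For the inductive step, using $(1-1/m)(m-1)^{-1/p}=(m-1)^{1/q}/m$ and concavity of $t\mapsto t^{1/q}$ (giving $m^{1/q}-(m-1)^{1/q}\ge m^{-1/p}/q$), one checks that the gap $Bm^{-1/p}-(1-1/m)B(m-1)^{-1/p}$ is at least $(B/q)m^{-(2q-1)/q}$. A direct computation shows $-q+(q-1)/p=-1-1/p=-(2q-1)/q$, so both the noise term $\e_m/m$ and the curvature term $C\gamma m^{-q}\|f_{m-1}^{c,\e}\|^{1-q}$ have exactly the same decay rate $m^{-(2q-1)/q}$, and their coefficients simplify to $K_1\gamma^{1/q}+CB^{1-q}\gamma^{1/q}$. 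The inductive step then reduces to the inequality
\[
B/q \;\ge\; K_1 + CB^{1-q},
\]
which holds for all sufficiently large $B=B(K_1,q)$, completing the induction.

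The main obstacle I expect is the $\|f_{m-1}^{c,\e}\|^{1-q}$ factor in the curvature term, which a priori could blow up. The resolution is precisely the inductive device: assuming the target bound for step $m-1$ converts $\|f_{m-1}^{c,\e}\|^{1-q}$ into $B^{1-q}\gamma^{(1-q)/q}(m-1)^{(q-1)/p}$, and the algebra of exponents then matches the $\e_m/m$ decay perfectly. A minor secondary point is the verification that we may always run the algorithm, which is immediate from the displayed string of inequalities preceding the theorem (showing the supremum over $\cD^\circ$ dominates $Re(F_{f_{m-1}^{c,\e}}(f))$). Given this, the argument is essentially mechanical once the recurrence above is in hand.
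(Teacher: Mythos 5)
Your derivation of the one-step recurrence from Lemma \ref{LL0} is correct and essentially the same as the paper's intermediate step (the paper factors out $(1-1/m)$ and applies Lemma \ref{LL0} with $u=-1/(m-1)$, but the resulting inequality
$$
\|f_m\|\le\Bigl(1-\tfrac1m\Bigr)\|f_{m-1}\|+\frac{\e_m}{m}+C\gamma\,m^{-q}\,\|f_{m-1}\|^{1-q}
$$
is equivalent). The gap is in how you then use it. You write that ``assuming the target bound for step $m-1$ converts $\|f_{m-1}^{c,\e}\|^{1-q}$ into $B^{1-q}\gamma^{(1-q)/q}(m-1)^{(q-1)/p}$,'' but $1-q<0$, so the inductive upper bound $\|f_{m-1}\|\le B\gamma^{1/q}(m-1)^{-1/p}$ gives $\|f_{m-1}\|^{1-q}\ge B^{1-q}\gamma^{(1-q)/q}(m-1)^{(q-1)/p}$ --- a \emph{lower} bound, not the upper bound your inductive step requires. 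A pure upper-bound induction on $\|f_m\|$ therefore does not close: if the residual at step $m-1$ happens to be much smaller than the target, the curvature term blows up and the recurrence says nothing useful.

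What is missing is a case analysis, which the paper packages into the technical Lemma \ref{ML4}. One establishes two facts: (i) always, $\|f_m\|\le\|f_{m-1}\|+2/m$ (additive growth control); and (ii) whenever $\|f_{m-1}\|\ge C(K_1)\gamma^{1/q}(m-1)^{-1/p}$, the recurrence yields the multiplicative decrease $\|f_m\|\le(1-3/(4m))\|f_{m-1}\|$, because in this regime the curvature and noise terms are each at most a fraction of $\|f_{m-1}\|/m$. Then Lemma \ref{ML4} (with $\alpha=1/p$, $\ga=3/4$) turns these two one-sided facts into the claimed $O(m^{-1/p})$ rate: when the residual is below the threshold the additive bound keeps it there, and when it is above the threshold it is driven down multiplicatively. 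If you prefer to avoid Lemma \ref{ML4}, you can do the same split explicitly inside your induction (e.g., consider separately $\|f_{m-1}\|\le\frac{B}{2}\gamma^{1/q}m^{-1/p}$ and its complement), but the point is that you need a \emph{lower} bound on $\|f_{m-1}\|$ to control $\|f_{m-1}\|^{1-q}$, and that only comes from the case split, not from the inductive hypothesis. A minor secondary point: the base case is $\|f_1\|\le 2$ (not $\|f\|\le 1$), and for $m=1$ Lemma \ref{LL0} is not applicable since $x=(1-1/m)f_0=0$; both are harmless once $B$ is chosen large enough, using that $\rho(u)\ge u-1$ forces $\gamma\ge 2^{-q}$.
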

\begin{proof} We will use the abbreviated notation $f_m:=f_m^{c,\e}$, $\ff_m:=\nu_m\ff_m^{c,\e}$, $G_m:=G_m^{c,\e}$. Writing
$$
f_m = f_{m-1}-(\ff_m-G_{m-1})/m,
$$
we  immediately obtain the trivial estimate
\be\label{M6.6}
\|f_m\|\le \|f_{m-1}\| +2/m.  
\ee
Since
\be\label{M6.7}
f_m=\left(1-\frac{1}{m}\right)f_{m-1}-\frac{\ff_m-f}{m} 
 = \left(1-\frac{1}{m}\right)\left(f_{m-1}-\frac{\ff_m-f}{m-1}\right) 
\ee
we obtain by Lemma \ref{LL0} with $y= \ff_m-f$ and $u= -1/(m-1)$
$$
\left\|f_{m-1} -\frac{\ff_m-f}{m-1}\right\| 
$$
\be\label{M6.8}
\le \|f_{m-1}\|\left(1+2\rho\left(\frac{2}{(m-1)\|f_{m-1}\|}\right)\right) +\frac{\e_m}{m-1}.  
\ee
 Using the definition of $\e_m$ and the assumption $\rho(u) \le \gamma u^q$, we make the following observation. There exists a constant $C(K_1)$ such that, if
\be\label{M6.9}
\|f_{m-1}\|\ge C(K_1) \gamma ^{1/q}(m-1)^{-1/p}  
\ee
then
\be\label{M6.10}
2\rho(2((m-1)\|f_{m-1}\|)^{-1}) + \e_m ((m-1)\|f_{m-1}\|)^{-1} \le 1/(4m),  
\ee
and therefore, by (\ref{M6.7}) and (\ref{M6.8})
\be\label{M6.11}
\|f_m\| \le \left(1-\frac{3}{4m}\right)\|f_{m-1}\|.  
\ee

We now need the following   technical lemma (see, for instance, \cite{VTbook}, p.357).
\begin{Lemma}\label{ML4}  Let a sequence $\{a_n\}_{n=1}^\infty$ have the following property. For given positive
 numbers $\alpha < \ga \le 1$, $A > a_1$,  we have for all $n\ge2$
 \be\label{M5.6}
 a_n\le a_{n-1}+A(n-1)^{-\alpha}.  
 \ee
 If for some $v \ge2$ we have
$$
a_v \ge Av^{-\alpha}
$$
then
 \be\label{M5.7}
a_{v + 1} \le a_v (1- \ga/v). 
\ee
Then there exists a constant $C(\alpha , \ga )$ such that for all $n=1,2,\dots $ we have
$$
a_n \le C(\alpha,\ga)An^{-\alpha} .
$$
 \end{Lemma}

Taking into account (\ref{M6.6}) we apply Lemma \ref{ML4}  to the sequence $a_n=\|f_n\|$, $n=1,2,\dots$ with $\alpha=1/p$, $\beta =3/4$ and complete the proof of Theorem \ref{MT5}.
\end{proof}

We now consider one more modification  of {\bf IA($\e$)} suitable for the complex Banach spaces and providing the convex combination of the dictionary elements. In the case of real Banach spaces it coincides with the {\bf IA($\e$)}.
 
 {\bf Incremental Algorithm (complex and convex) with schedule $\e$ (IAcc($\e$)).} 
Let $f\in \conv(\cD)$. Denote $f_0^{cc,\e}:= f$ and $G_0^{cc,\e} :=0$. Then, for each $m\ge 1$ we have the following inductive definition.

(1) $\ff_m^{cc,\e} \in \cD$ is any element satisfying
$$
Re(F_{f_{m-1}^{cc,\e}}(\ff_m^{cc,\e}-f)) \ge -\e_m.
$$
 
(2) Define
$$
G_m^{cc,\e}:= (1-1/m)G_{m-1}^{cc,\e} + \ff_m^{cc,\e}/m.
$$

(3) Let
$$
f_m^{cc,\e} := f- G_m^{cc,\e}.
$$

It follows from the definition of the {\bf IAcc($\e$)} that 
\be\label{M15}
G_m^{cc,\e} = \frac{1}{m} \sum_{j=1}^m \ff_j^{cc,\e} . 
\ee

Note that by   Lemma \ref{LL3}
$$
 \sup_{g \in \cD}Re(F_{f_{m-1}^{cc,\e}}(g))
= \sup_{\phi \in \conv(\cD)}Re(F_{f_{m-1}^{cc,\e}}(\phi))   \ge Re(F_{f_{m-1}^{cc,\e}}(f)).
$$
This means that we can always run the {\bf IAcc($\e$)} for $f\in \conv(\cD)$.
 
In the same way as Theorem \ref{MT5} was proved above one can prove the following Theorem \ref{MT6}.

\begin{Theorem}\label{MT6} Let $X$ be a uniformly smooth Banach space with  modulus of smoothness $\rho(u)\le \gamma u^q$, $1<q\le 2$. Define
$$
\e_n := K_1\gamma ^{1/q}n^{-1/p},\qquad p=\frac{q}{q-1},\quad n=1,2,\dots .
$$
Then, for any $f\in  \conv(\cD)$ we have
$$
\|f_m^{cc,\e}\| \le C(K_1) \gamma^{1/q}m^{-1/p},\qquad m=1,2\dots.
$$
\end{Theorem}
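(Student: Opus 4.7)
The plan is to follow the proof of Theorem \ref{MT5} essentially verbatim, since the only structural difference between IAcc($\e$) and IAc($\e$) is the absence of the phase factor $\nu_j$ (and, correspondingly, the selection being made inside $\cD$ rather than $\cD^\circ$); neither change affects the key one-step estimate. Writing $f_m:=f_m^{cc,\e}$, $\ff_m:=\ff_m^{cc,\e}$ and $G_m:=G_m^{cc,\e}$, I would first note that $\|f\|\le 1$ (since $f\in\conv(\cD)$ and every $g\in\cD$ has $\|g\|\le 1$) and $\|\ff_m\|\le 1$, so the trivial bound $\|f_m\|\le\|f_{m-1}\|+2/m$ goes through, and the algebraic identity
\[
f_m=\left(1-\frac{1}{m}\right)\!\left(f_{m-1}-\frac{\ff_m-f}{m-1}\right)
\]
is immediate from the update rule, mirroring (\ref{M6.7}).

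The only point where the complex setting needs attention is the application of Lemma \ref{LL0} with $x=f_{m-1}$, $u=-1/(m-1)$, $y=\ff_m-f$. The selection rule in step (1) of IAcc($\e$) says $Re(F_{f_{m-1}}(\ff_m-f))\ge-\e_m$, whence
\[
Re(uF_x(y))=-\frac{Re(F_{f_{m-1}}(\ff_m-f))}{m-1}\le\frac{\e_m}{m-1},
\]
and together with $\|y\|\le 2$ and monotonicity of $\rho$ on $[0,\infty)$, Lemma \ref{LL0} yields
\[
\left\|f_{m-1}-\frac{\ff_m-f}{m-1}\right\|\le\|f_{m-1}\|\left(1+2\rho\!\left(\frac{2}{(m-1)\|f_{m-1}\|}\right)\right)+\frac{\e_m}{m-1},
\]
which is exactly the analogue of (\ref{M6.8}).

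From here the remainder of the proof of Theorem \ref{MT5} transfers without any change: substituting $\rho(u)\le\gamma u^q$ and $\e_m=K_1\gamma^{1/q}m^{-1/p}$ into the display, one chooses a constant $C(K_1)$ large enough that whenever $\|f_{m-1}\|\ge C(K_1)\gamma^{1/q}(m-1)^{-1/p}$ the error term is at most $1/(4m)$, giving the geometric contraction $\|f_m\|\le(1-3/(4m))\|f_{m-1}\|$. Lemma \ref{ML4} applied to $a_n=\|f_n\|$ with $\alpha=1/p$ and $\ga=3/4$ then closes the induction. The only genuine ``obstacle,'' if one may call it that, is checking that the real-part formulation of Lemma \ref{LL0} lines up with the real-part selection rule of IAcc($\e$); once that is observed, no further complex-analytic bookkeeping is required, and Lemma \ref{LL3} has already done its work by ensuring that the selection in step (1) is always feasible for $f\in\conv(\cD)$.
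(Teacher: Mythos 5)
Your proposal is correct and follows exactly what the paper intends: the paper itself states that Theorem~\ref{MT6} is proved ``in the same way as Theorem~\ref{MT5},'' and you supply precisely the missing details, correctly observing that dropping the phase factor $\nu_j$ only simplifies matters because the selection rule of IAcc($\e$) already bounds $Re(F_{f_{m-1}}(\ff_m-f))$ directly (whereas in IAc($\e$) one needs the extra observation that multiplying by $\nu_m$ can only increase this real part), and that Lemma~\ref{LL3} guarantees feasibility for $f\in\conv(\cD)$. Your application of Lemma~\ref{LL0} with real $u=-1/(m-1)$, the bound $\|\ff_m-f\|\le 2$, and the subsequent invocation of Lemma~\ref{ML4} with $\alpha=1/p$ and the contraction parameter $3/4$ all match the template of the proof of Theorem~\ref{MT5}.
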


{\bf Acknowledgements.}
This work was supported by Ministry of Science and Higher Education of 
the Russian Federation (Grant No. 075-15-2024-529).

 \Addresses
 
\end{document}